\documentclass[12pt,letterpaper,reqno]{amsart}
\usepackage{amsmath,amsthm,amsfonts,amssymb,mathtools}
\usepackage[lite]{amsrefs}

\addtolength{\hoffset}{-2.25cm} \addtolength{\textwidth}{4.5cm}
\addtolength{\voffset}{-0.75cm} \addtolength{\textheight}{1.5cm}

\newtheorem{theorem}{Theorem}
\newtheorem{proposition}[theorem]{Proposition}
\newtheorem{lemma}[theorem]{Lemma}

\numberwithin{equation}{section}

\newcommand{\M}{{\mathcal M}}

\begin{document}

\title[Sobolev norm estimates for a class of bilinear multipliers]{Sobolev norm estimates for a class of\\ bilinear multipliers}

\author{Fr\'{e}d\'{e}ric Bernicot}
\address{Fr\'{e}d\'{e}ric Bernicot, CNRS -- Universit\'{e} de Nantes, Laboratoire de Math\'{e}matiques Jean Leray, 2,
Rue de la Houssini\`{e}re F-44322 Nantes Cedex 03, France}
\email{frederic.bernicot@univ-nantes.fr}
\thanks{The first author is partially supported by the ANR under the project AFoMEN no. 2011-JS01-001-01.}

\author{Vjekoslav Kova\v{c}}
\address{Vjekoslav Kova\v{c}, University of Zagreb, Department of Mathematics, Bijeni\v{c}ka cesta 30, 10000 Zagreb, Croatia}
\email{vjekovac@math.hr}
\thanks{The second author was partially supported by the MZOS grant 037-0372790-2799 of the Republic of Croatia.}

\subjclass[2010]{42B15}
\keywords{bilinear multiplier, paraproducts, Sobolev spaces, pseudodifferential operators}

\day=30 \month=07 \year=2013
\date{\today}

\begin{abstract}
We consider bilinear multipliers that appeared as a distinguished particular case in the classification of
two-dimensional bilinear Hilbert transforms by Demeter and Thiele \cite{DT}.
In this note we investigate their boundedness on Sobolev spaces.
Furthermore, we study structurally similar operators with symbols that also depend on the spatial variables.
The new results build on the existing $\mathrm{L}^p$ estimates for a paraproduct-like operator
previously studied by the authors in \cite{Ber} and \cite{Kov}.
Our primary intention is to emphasize the analogies with Coifman-Meyer multipliers
and with bilinear pseudodifferential operators of order $0$.
\end{abstract}

\maketitle

\section{Introduction}

Multilinear Fourier multipliers that are singular along linear subspaces of relatively high codimension have attracted some attention since the
breakthrough papers of Lacey and Thiele \cite{LT1},\,\cite{LT2} on $\mathrm{L}^p$ boundedness of the \emph{bilinear Hilbert transform},
\begin{equation}\label{eqbht}
T_{\mathrm{BH}}(f,g)(x)
:= \mathrm{p.v.}\!\int_{\mathbb{R}}f(x-t)g(x+t)\frac{dt}{t}
= \int_{\mathbb{R}^2} \pi i \mathop{\mathrm{sgn}}(\eta-\xi)
e^{2\pi i x (\xi+\eta)} \widehat{f}(\xi) \widehat{g}(\eta) d\xi d\eta .
\end{equation}
A more general result for multilinear singular multipliers obtained using similar techniques can be found for instance in a paper
by Muscalu, Tao, and Thiele \cite{MTT}.
Recently, Demeter and Thiele discovered certain new phenomena in higher dimensions in an attempt to classify
and study two-dimensional analogues of (\ref{eqbht}).
Their paper \cite{DT} is concerned with $\mathrm{L}^p$ estimates for two-dimensional bilinear multipliers
\begin{equation}\label{eqbilinfourmulti}
T_{\mu}(f,g)(x,y) := \int_{\mathbb{R}^4} \mu(\xi_1,\xi_2,\eta_1,\eta_2)
e^{2\pi i (x (\xi_1+\eta_1) + y (\xi_2+\eta_2))} \widehat{f}(\xi_1,\xi_2) \widehat{g}(\eta_1,\eta_2) d\xi_1 d\xi_2 d\eta_1 d\eta_2 ,
\end{equation}
with a symbol
$$ \mu(\xi_1,\xi_2,\eta_1,\eta_2) = m\big(A^\tau(\xi_1,\xi_2)+B^\tau(\eta_1,\eta_2)\big) , $$
where $A,B\in\mathrm{M}_{2}(\mathbb{R})$ are linear maps on $\mathbb{R}^2$
and $m\in\mathrm{C}^{\infty}\big(\mathbb{R}^2\!\setminus\!\{(0,0)\}\big)$ satisfies
\begin{equation}\label{eqhomsymbolest}
\big|\partial_{\tau_1}^{\beta_1}\partial_{\tau_2}^{\beta_2}m(\tau_1,\tau_2)\big| \leq C_{\beta_1,\beta_2} (|\tau_1|+|\tau_2|)^{-\beta_1-\beta_2}
\quad\textrm{for all }\beta_1,\beta_2\geq 0.
\end{equation}
Note that the singularity of $\mu$ can be a subspace of $\mathbb{R}^4$ of codimension $2$,
making (\ref{eqbilinfourmulti}) substantially more singular than Coifman-Meyer multipliers \cite{CoMe}.
An additional structural difficulty observed in \cite{DT} are degeneracies in the way the singular subspace
is positioned with respect to the planes $\xi_1=\xi_2=0$ and $\eta_1=\eta_2=0$.
Some of those degeneracies had to be handled with a mixture of one-dimensional and two-dimensional techniques, while the particular case
\begin{equation}\label{eqmatrices}
A=\begin{bmatrix} 1 & 0 \\ 0 & 0 \end{bmatrix} \quad\textrm{and}\quad B=\begin{bmatrix} 0 & 0 \\ 0 & 1 \end{bmatrix}
\end{equation}
could not be resolved using wave-packet analysis.
Moreover, (\ref{eqmatrices}) was identified as the only such case (modulo trivial transformations) and left out from the discussion.

If one further chooses $m$ associated with a single ``cone'' in the frequency domain, then $T_{\mu}$ will become a paraproduct-like operator
studied by each of the authors in \cite{Ber} and \cite{Kov} respectively.
More precisely, take two Schwartz functions $\varphi$ and $\psi$ such that $\widehat{\psi}(\xi)$ is supported in $\frac{1}{2}\leq|\xi|\leq 2$
and consider the symbol
$$ \mu(\xi_1,\xi_2,\eta_1,\eta_2) = m(\xi_1,\eta_2)
= \sum_{k\in\mathbb{Z}}\widehat{\varphi}(2^{-k}\xi_1)\,\widehat{\psi}(2^{-k}\eta_2) , $$
in which case (\ref{eqbilinfourmulti}) becomes
\begin{equation}\label{eqtwistedpar}
\Pi_{\varphi,\psi}(f,g)(x,y) := \sum_{k\in\mathbb{Z}}
\Big(\int_{\mathbb{R}}f(x-u,y)\,2^k\varphi(2^k u)du\Big) \Big(\int_{\mathbb{R}}g(x,y-v)\,2^k\psi(2^k v)dv\Big) .
\end{equation}
Its continuous-scale variant can be written as
$$ \int_{0}^{\infty} \Big(\int_{\mathbb{R}}f(x-u,y)\,\frac{1}{t}\varphi\Big(\frac{u}{t}\Big)du\Big)
\Big(\int_{\mathbb{R}}g(x,y-v)\,\frac{1}{t}\psi\Big(\frac{v}{t}\Big)dv\Big) \frac{dt}{t} $$
and there is no significant difference between these two models.
Operator (\ref{eqtwistedpar}) is sometimes called the \emph{twisted paraproduct}, due to its paraproduct-like structure.
It satisfies a certain range of $\mathrm{L}^p$ estimates, with constants depending on the first several Schwartz seminorms of $\varphi$ and $\psi$.
We refer to \cite[Theorem 1]{Kov} for the precise formulation of that result.
From the known $\mathrm{L}^p$ bounds for (\ref{eqtwistedpar}) one can then handle $\mathrm{L}^p$ estimates
for the general multipliers $T_\mu$ with $A,B$ as in (\ref{eqmatrices})
and with $m$ satisfying (\ref{eqhomsymbolest}), as was briefly commented in \cite{Kov}.
Some details of that reduction can also be found in Section \ref{sectionproofthm2}.

\smallskip
On the other hand, it is a classical fact that properly supported Coifman-Meyer multipliers satisfy a range of Sobolev norm estimates.
Following the analogy between ordinary paraproducts and the twisted paraproduct we believe it is of interest to formulate and prove
Sobolev estimates for (\ref{eqtwistedpar}) and for the more general class of multipliers to which it belongs.
In order to state our results we denote the two-dimensional non-homogeneous Sobolev norm of $f$ by $\|f\|_{\mathrm{W}^{s,p}}$,
$$ \|f\|_{\mathrm{W}^{s,p}} := \|(I-\triangle)^{s/2}f\|_{\mathrm{L}^{p}(\mathbb{R}^2)} , $$
and we also introduce a particular case of the mixed (or partial) Sobolev norm $\|f\|_{\mathrm{L}^{p}_{y}(\mathrm{W}^{s,p}_{x})}$ defined as
$$ \|f\|_{\mathrm{L}^{p}_{y}(\mathrm{W}^{s,p}_{x})}
:= \big\|\|f(x,y)\|_{\mathrm{W}^{s,p}_{x}(\mathbb{R})}\big\|_{\mathrm{L}^{p}_{y}(\mathbb{R})}
= \|(I-\triangle_x)^{s/2}f\|_{\mathrm{L}^{p}(\mathbb{R}^2)} $$
for $s\geq 0$ and $1<p<\infty$.

Let us agree to write $A\lesssim B$ whenever $A\leq C B$ holds with some constant $C>0$.
The set of parameters on which $C$ depends will usually be clear from the context; otherwise we will state it in the text or denote it in the subscript.
We also write $A\sim B$ when both $A\lesssim B$ and $B\lesssim A$ are satisfied.

As we said, we consider bilinear operators of the form
\begin{equation}\label{eqnewmultiplier}
T_{m}(f,g)(x,y) := \int_{\mathbb{R}^4} m(\xi_1,\eta_2)
e^{2\pi i (x (\xi_1+\eta_1) + y (\xi_2+\eta_2))} \widehat{f}(\xi_1,\xi_2) \widehat{g}(\eta_1,\eta_2) d\xi_1 d\xi_2 d\eta_1 d\eta_2 ,
\end{equation}
defined for Schwartz functions $f,g\in\mathcal{S}(\mathbb{R}^2)$.
It turns out that the simplest nontrivial Sobolev estimate one can hope for is a bound
from $\mathrm{L}^p\times\mathrm{W}^{s,q}$ to $\mathrm{L}^{r}_{y}(\mathrm{W}^{s,r}_{x})$,
at least in the particular case of (\ref{eqtwistedpar}).
Before stating our positive results, let us formulate a simple proposition,
which shows the necessity of imposing some support conditions on general $m$.

\begin{proposition}\label{prop1}
If inequality
$$ \|T_{m}(f,g)\|_{\mathrm{L}^{r}_{y}(\mathrm{W}^{s,r}_{x})} \lesssim \|f\|_{\mathrm{L}^p} \|g\|_{\mathrm{W}^{s,q}} $$
holds for some choice of $1\!<\!p,q,r\!<\!\infty$, $\frac{1}{p}+\frac{1}{q}=\frac{1}{r}$ and $s>0$,
then every super-level set of $m$ has to lie inside a region of the form $|\tau_1|\lesssim 1+|\tau_2|$.
More precisely, for each $\gamma>0$ there exists a constant $c_{\gamma}>0$ such that
$$ \big\{(\tau_1,\tau_2):|m(\tau_1,\tau_2)|\geq\gamma\big\} \,\subseteq\, \big\{(\tau_1,\tau_2):|\tau_1|\leq c_{\gamma}(1+|\tau_2|)\big\} . $$
\end{proposition}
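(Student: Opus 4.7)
The plan is to test the hypothesised inequality against a family of modulated Schwartz bumps whose Fourier transforms concentrate near $(\tau_1,0)$ and $(0,\tau_2)$, using a dilation parameter $\lambda$ that eventually cancels. Fix a real, even $\phi\in\mathcal{S}(\mathbb{R})$ with $\widehat{\phi}\ge 0$, $\widehat{\phi}$ compactly supported and $\int\widehat\phi=1$. For $\lambda\ge 1$ set $\phi_\lambda(x):=\phi(x/\lambda)$, so that $\widehat{\phi_\lambda}(u)=\lambda\widehat\phi(\lambda u)$ is an $\mathrm{L}^1$-normalised approximate identity of scale $1/\lambda$. For a given $(\tau_1,\tau_2)\in\mathbb{R}^2$ I would take
\[ f(x,y):=\phi_\lambda(x)\phi_\lambda(y)\,e^{2\pi i\tau_1 x},\qquad g(x,y):=\phi_\lambda(x)\phi_\lambda(y)\,e^{2\pi i\tau_2 y}. \]

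Substituting $\xi_1=\tau_1+u$, $\eta_2=\tau_2+v$ in~\eqref{eqnewmultiplier} and performing the $\xi_2,\eta_1$ integrations (which decouple and reassemble $\phi_\lambda(x)\phi_\lambda(y)$ via Fourier inversion) produces
\[ T_m(f,g)(x,y)\,=\,e^{2\pi i(\tau_1 x+\tau_2 y)}\,\phi_\lambda(x)\phi_\lambda(y)\,M_\lambda(x,y), \]
where $M_\lambda(x,y):=\int_{\mathbb{R}^2} m(\tau_1+u,\tau_2+v)\widehat{\phi_\lambda}(u)\widehat{\phi_\lambda}(v)e^{2\pi i(xu+yv)}\,du\,dv$. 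From here I would read off the three basic bounds: $\|f\|_{\mathrm{L}^p}=\lambda^{2/p}\|\phi\|_{\mathrm{L}^p}^2$ is immediate; $\|g\|_{\mathrm{W}^{s,q}}\lesssim\lambda^{2/q}(1+|\tau_2|)^s$ follows from the location of the Fourier support of $g$ near $(0,\tau_2)$ at scale $1/\lambda$; and the fact that $\widehat{T_m(f,g)}$ is supported in $x$-frequency inside $\{|\alpha-\tau_1|\le C\}$ implies, via a routine multiplier computation, that $\|T_m(f,g)\|_{\mathrm{L}^r_y(\mathrm{W}^{s,r}_x)}\gtrsim|\tau_1|^s\,\|T_m(f,g)\|_{\mathrm{L}^r}$ whenever $|\tau_1|$ exceeds an absolute constant.

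The final ingredient is a lower bound on $\|T_m(f,g)\|_{\mathrm{L}^r}$. At any Lebesgue point $(\tau_1,\tau_2)$ of $m$ --- hence at almost every point of $\{|m|\ge\gamma\}$ --- the approximate-identity property gives $M_\lambda(0,0)\to m(\tau_1,\tau_2)$ as $\lambda\to\infty$, while the fact that $\widehat{M_\lambda}$ has support of radius $O(1/\lambda)$ makes $M_\lambda$ Lipschitz with constant $O(\|m\|_\infty/\lambda)$. Consequently $|M_\lambda|\gtrsim\gamma$ on a box of sidelength $\sim\lambda\gamma$ about the origin, where also $|\phi(x/\lambda)|\gtrsim 1$, giving $\|T_m(f,g)\|_{\mathrm{L}^r}\gtrsim\gamma^{1+2/r}\lambda^{2/r}$ once $\lambda$ is large enough. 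Assembling the three bounds with the H\"older relation $\tfrac{1}{p}+\tfrac{1}{q}=\tfrac{1}{r}$ causes every power of $\lambda$ to cancel and leaves
\[ |\tau_1|^s\,\gamma^{1+2/r}\,\lesssim\,(1+|\tau_2|)^s \]
at almost every point of the super-level set, from which $c_\gamma\sim\gamma^{-(1+2/r)/s}$ may be read off (the case of bounded $|\tau_1|$ being trivial). The most delicate step I anticipate is the Lebesgue-differentiation convergence, which forces the set-theoretic inclusion to be understood modulo a null set --- the natural interpretation since $m$ is only defined almost everywhere.
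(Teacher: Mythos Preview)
Your argument is correct in outline and reaches the conclusion, but it is organized differently from the paper's proof. The paper first isolates a general lemma: any bounded bilinear Fourier multiplier $T_\mu\colon \mathrm{L}^p\times\mathrm{L}^q\to\mathrm{L}^r$ must have $\|\mu\|_{\mathrm{L}^\infty}\le\|T_\mu\|$, proved (much as you do) by testing against modulated bumps and letting the scale shrink. Proposition~1 is then a one-line consequence: the assumed Sobolev bound is equivalent to the $\mathrm{L}^p\times\mathrm{L}^q\to\mathrm{L}^r$ boundedness of the modified multiplier
\[
\nu(\xi_1,\xi_2,\eta_1,\eta_2)=m(\xi_1,\eta_2)\Big(\tfrac{1+4\pi^2(\xi_1+\eta_1)^2}{1+4\pi^2(\eta_1^2+\eta_2^2)}\Big)^{s/2},
\]
and evaluating the resulting bound $\|\nu\|_{\mathrm{L}^\infty}\le C_m$ at $\eta_1=0$ gives $|\xi_1|/(1+|\eta_2|)\lesssim(C_m/\gamma)^{1/s}$ on the super-level set. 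Your direct route keeps the Sobolev weights visible throughout rather than absorbing them into $\nu$; the paper's packaging is a little cleaner and yields the reusable lemma, while yours avoids introducing the auxiliary four-variable symbol.

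One point to tighten: your Lipschitz bound ``$O(\|m\|_\infty/\lambda)$'' on $M_\lambda$ presupposes $m\in\mathrm{L}^\infty$, which is not part of the hypothesis (and in fact need not hold, as the paper's own computation shows only $|m(\xi_1,\eta_2)|\lesssim(1+|\eta_2|)^s$). You can sidestep this entirely: at a Lebesgue point one also has $\int|m(\tau_1+u,\tau_2+v)|\,\widehat{\phi_\lambda}(u)\widehat{\phi_\lambda}(v)\,du\,dv\to|m(\tau_1,\tau_2)|$, so the Lipschitz constant of $M_\lambda$ is really $O(|m(\tau_1,\tau_2)|/\lambda)$ for $\lambda$ large. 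This gives $|M_\lambda|\gtrsim|m(\tau_1,\tau_2)|\ge\gamma$ on a box of sidelength $\sim\lambda$ (not $\sim\lambda\gamma$), hence $\|T_m(f,g)\|_{\mathrm{L}^r}\gtrsim\gamma\,\lambda^{2/r}$, and after cancellation the sharper constant $c_\gamma\sim\gamma^{-1/s}$, matching the paper.
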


The above proposition also explains the need to break the symmetry between the roles of $f$ and $g$
in order to have nontrivial estimates for $T_{m}(f,g)$.
Since we imposed a homogeneous condition on $m$, we find it natural to discuss only symbols that are actually supported in a half-plane $|\tau_1|\lesssim|\tau_2|$.
Now we can formulate the first boundedness result.

\begin{theorem}\label{theorem2}
Suppose that $m\in\mathrm{C}^{\infty}\big(\mathbb{R}^2\!\setminus\!\{(0,0)\}\big)$ satisfies Estimates \emph{(\ref{eqhomsymbolest})}
and define bilinear multiplier operator $T_{m}$ by \emph{(\ref{eqnewmultiplier})}. If
\begin{equation}\label{eqthm1support}
\mathop{\mathrm{supp}}m \,\subseteq\, \big\{(\tau_1,\tau_2):|\tau_1|\leq c\,|\tau_2|\big\}
\end{equation}
for some constant $c>0$, then the estimate
\begin{equation}\label{eqthm1estimate}
\|T_{m}(f,g)\|_{\mathrm{L}^{r}_{y}(\mathrm{W}^{s,r}_{x})} \lesssim \|f\|_{\mathrm{L}^p} \|g\|_{\mathrm{W}^{s,q}}
\end{equation}
holds for $s\geq 0$ and the exponents $1<p,q,r<\infty$ such that $\frac{1}{p}+\frac{1}{q}=\frac{1}{r}>\frac{1}{2}$.
The implicit constant in \emph{(\ref{eqthm1estimate})} depends on $p,q,r,s$, on constants $C_{\beta_1,\beta_2}$ from \emph{(\ref{eqhomsymbolest})},
and on the constant $c$ from \emph{(\ref{eqthm1support})}.
\end{theorem}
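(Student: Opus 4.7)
The plan is to reduce the Sobolev estimate (\ref{eqthm1estimate}) to the $\mathrm{L}^p\times\mathrm{L}^q\to\mathrm{L}^r$ bound for an operator of the same type as $T_m$, by transferring the weight $\langle\xi_1+\eta_1\rangle^s$ (with $\langle\cdot\rangle:=(1+|\cdot|^2)^{1/2}$) from the output onto the second input $g$. The support hypothesis (\ref{eqthm1support}) is exactly what makes this transfer possible without ruining the symbol estimates.

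The key observation is that on $\mathop{\mathrm{supp}}m$,
$$|\xi_1+\eta_1|\le|\xi_1|+|\eta_1|\le c|\eta_2|+|\eta_1|\le(c+1)\,|(\eta_1,\eta_2)|,$$
which yields $\langle\xi_1+\eta_1\rangle^s\lesssim\langle(\eta_1,\eta_2)\rangle^s$. Picking a smooth cutoff $\chi(\xi_1,\eta_2)$ equal to $1$ on $\mathop{\mathrm{supp}}m$ and supported in a slightly wider cone, I set
$$\widetilde m(\xi_1,\eta_1,\eta_2):=\chi(\xi_1,\eta_2)\,m(\xi_1,\eta_2)\,\frac{\langle\xi_1+\eta_1\rangle^s}{\langle(\eta_1,\eta_2)\rangle^s},\qquad h:=(I-\triangle)^{s/2}g,$$
and read off from the Fourier representations the algebraic identity $(I-\partial_x^2)^{s/2}T_m(f,g)=T_{\widetilde m}(f,h)$. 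Since $\|h\|_{\mathrm{L}^q}=\|g\|_{\mathrm{W}^{s,q}}$, Estimate (\ref{eqthm1estimate}) is equivalent to the $\mathrm{L}^p\times\mathrm{L}^q\to\mathrm{L}^r$ boundedness of $T_{\widetilde m}$. A direct computation shows that $\widetilde m$ is smooth off the plane $\xi_1=\eta_2=0$ and satisfies homogeneous Mihlin-type estimates in the three variables $(\xi_1,\eta_1,\eta_2)$ compatible with (\ref{eqhomsymbolest}), because each differentiation of the bounded quotient $\langle\xi_1+\eta_1\rangle^s/\langle(\eta_1,\eta_2)\rangle^s$ gains a factor of the expected size on the region $|\xi_1|\lesssim|\eta_2|$.

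It remains to prove the bilinear bound for $T_{\widetilde m}$. I plan to adapt the reduction to the twisted paraproduct (\ref{eqtwistedpar}) sketched in Section \ref{sectionproofthm2}: decompose $\widetilde m$ dyadically in the $(\xi_1,\eta_2)$-pair, expand each dyadic piece in a Fourier series on a suitable annulus so as to separate its dependence on $\xi_1$, $\eta_1$ and $\eta_2$, apply \cite[Theorem 1]{Kov} to each resulting twisted paraproduct (precomposed with a bounded $x$-Fourier multiplier on $h$ coming from the $\eta_1$-factor), and sum against the decay of the Fourier coefficients supplied by the symbol estimates. The main obstacle lies precisely here: the extra variable $\eta_1$ takes $T_{\widetilde m}$ slightly outside the class of two-variable symbols for which \cite[Theorem 1]{Kov} is stated, so the expansion and summation must be implemented with enough care that the $\eta_1$-multipliers, being merely bounded Fourier multipliers in $x$ acting on the second factor, do not disturb the underlying $\mathrm{L}^q$ control of $h$.
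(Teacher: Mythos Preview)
Your reduction to the three-variable symbol $\widetilde m(\xi_1,\eta_1,\eta_2)$ is algebraically correct, and the observation $\langle\xi_1+\eta_1\rangle\lesssim\langle(\eta_1,\eta_2)\rangle$ on $\mathop{\mathrm{supp}}m$ is exactly the right one. The difficulty you flag at the end is however a genuine gap, not a detail. At each dyadic scale $|\eta_2|\sim 2^k$, $|\xi_1|\lesssim 2^k$, the variable $\eta_1$ still ranges over all of $\mathbb{R}$, so there is no box on which to run a Fourier series that separates the three variables. If instead you expand only in $(\xi_1,\eta_2)$, the Fourier coefficients $\kappa_{n_1,n_2}^{(k)}(\eta_1)$ become $k$-dependent multipliers in the $x$-variable acting on $h$, and the resulting object is
\[
\sum_{k}\mathrm{P}^{\varphi_k^{(n_1)}}_{x}f\;\mathrm{P}^{\psi_k^{(n_2)}}_{y}\big(M_k^{(n_1,n_2)}h\big),
\]
with a different $M_k$ in each summand. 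This is \emph{not} a twisted paraproduct with bounded scalar coefficients: the input function changes with $k$, so \cite[Theorem~1]{Kov} does not apply. Even if each $M_k$ were an $\mathrm{L}^q$-bounded multiplier (and the Mihlin bound $|\partial_{\eta_1}\kappa^{(k)}|\lesssim|\eta_1|^{-1}$ fails in the range $1\ll|\eta_1|\lesssim 2^k$ when $\xi_1+\eta_1$ is small), you would still need a vector-valued or square-function version of the twisted paraproduct estimate to sum in $k$, and that is essentially the content of the theorem you are trying to prove.

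The paper avoids this circularity by reversing the order of the two reductions. It first proves the Sobolev estimate (\ref{eqthm1estimate}) directly for the building blocks $\Pi_{\varphi,\psi,\lambda}$ (Lemma~\ref{lemma1}), keeping the symbol two-variable throughout: for integer $s$ one uses the Leibniz rule $\partial_x^s(\mathrm{P}^{\varphi_k}_x f\cdot\mathrm{P}^{\psi_k}_y g)$ and handles the terms with $\partial_x^\beta$ landing on the first factor via the support constraint $|\xi_1|\lesssim 2^k\sim|\eta_2|$ and ordinary Littlewood--Paley square functions; for general $s$ one decomposes $\Delta_{\ell,x}\Pi_{\varphi,\psi,\lambda}(f,g)$ into paraproduct-type pieces. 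Only afterwards is the general $m(\xi_1,\eta_2)$ expanded as a rapidly convergent sum of such $\Pi_{\varphi,\psi,\lambda}$ by the usual dyadic-plus-Fourier-series argument. The point is that the Sobolev weight is absorbed at the level of the elementary operator, where one has explicit access to both factors, rather than being folded into the symbol.
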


Note that the symbol of the operator (\ref{eqtwistedpar}) fulfills requirement (\ref{eqthm1support}) if $\widehat{\varphi}$ is compactly supported.
Indeed, the proof of Theorem \ref{theorem2} will begin by establishing (\ref{eqthm1estimate}) for a more general variant of (\ref{eqtwistedpar}).
Restricting to the subregion $r<2$ is required in order to be able to apply the results from \cite{Kov},
but we do not know if it is necessary for the estimate to hold.

\smallskip
Finally we turn to $(x,y)$-dependent symbols $\sigma\in\mathrm{C}^{\infty}(\mathbb{R}^4)$ satisfying non-homogeneous estimates of order $0$,
\begin{equation}\label{eqnonhomsymbolest}
\big|\partial_{x}^{\alpha_1}\partial_{y}^{\alpha_2}\partial_{\tau_1}^{\beta_1}\partial_{\tau_2}^{\beta_2}\sigma(x,y,\tau_1,\tau_2)\big|
\leq C_{\alpha_1,\alpha_2,\beta_1,\beta_2} (1+|\tau_1|+|\tau_2|)^{-\beta_1-\beta_2}
\quad\textrm{for all }\alpha_1,\alpha_2,\beta_1,\beta_2\geq 0
\end{equation}
and we define
\begin{equation}\label{eqxydependentoperator}
T_{\sigma}(f,g)(x,y) := \int_{\mathbb{R}^4} \sigma(x,y,\xi_1,\eta_2)
e^{2\pi i (x (\xi_1+\eta_1) + y (\xi_2+\eta_2))} \widehat{f}(\xi_1,\xi_2) \widehat{g}(\eta_1,\eta_2) d\xi_1 d\xi_2 d\eta_1 d\eta_2 .
\end{equation}

\begin{theorem}\label{theorem3}
Suppose that $\sigma\in\mathrm{C}^{\infty}\big(\mathbb{R}^4\big)$ satisfies Estimates \emph{(\ref{eqnonhomsymbolest})}
and define bilinear multiplier operator $T_{\sigma}$ by \emph{(\ref{eqxydependentoperator})}.
If there exists a constant $c>0$ such that for every $(x,y)\in {\mathbb R}^2$
\begin{equation}\label{eqthm1support-bis}
\mathop{\mathrm{supp}} \sigma(x,y,\cdot,\cdot) \subseteq \{(\tau_1,\tau_2)\in\mathbb{R}^2:|\tau_1|\leq c\,|\tau_2|\},
\end{equation}
then the estimate
\begin{equation}\label{eqthm1estimate-bis}
\|T_{\sigma}(f,g)\|_{\mathrm{L}^{r}_{y}(\mathrm{W}^{s,r}_{x})} \lesssim \|f\|_{\mathrm{L}^p} \|g\|_{\mathrm{W}^{s,q}}
\end{equation}
holds for $s\geq 0$ and the exponents $1<p,q,r<\infty$ such that $\frac{1}{p}+\frac{1}{q}=\frac{1}{r}>\frac{1}{2}$.
\end{theorem}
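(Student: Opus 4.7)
The strategy is to reduce Theorem~\ref{theorem3} to Theorem~\ref{theorem2} by Fourier-expanding the spatial dependence of $\sigma$ after a smooth localization in $(x,y)$, following the classical Coifman--Meyer device for $x$-dependent symbols of order $0$. Fix $\chi\in\mathrm{C}_c^{\infty}(\mathbb{R}^2)$ with $\sum_{j\in\mathbb{Z}^2}\chi(x-j_1,y-j_2)=1$, and write $\sigma=\sum_{j}\sigma_j$ via $\sigma_j(x,y,\tau):=\chi(x-j_1,y-j_2)\sigma(x,y,\tau)$. Each $\sigma_j$ is compactly supported in $(x,y)$, inherits (\ref{eqthm1support-bis}) and the symbol estimates (\ref{eqnonhomsymbolest}) uniformly in $j$, and has uniformly bounded $(x,y)$-derivatives of all orders.

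Choosing an auxiliary $\widetilde\chi\in\mathrm{C}_c^{\infty}(\mathbb{R}^2)$ equal to $1$ on $\mathop{\mathrm{supp}}\chi$, expand $\sigma_j$ as a Fourier series in $(x,y)$ over a fixed box $[-L/2,L/2]^2$ containing $\mathop{\mathrm{supp}}\widetilde\chi$:
\[
\sigma_j(x,y,\tau_1,\tau_2)
=\widetilde\chi(x-j_1,y-j_2)\sum_{n\in\mathbb{Z}^2}c_{j,n}(\tau_1,\tau_2)\,e^{2\pi i n\cdot(x-j_1,y-j_2)/L}.
\]
Integration by parts in the coefficient formula, using the uniform $(x,y)$-smoothness of $\sigma$, yields
\[
\big|\partial_{\tau_1}^{\beta_1}\partial_{\tau_2}^{\beta_2}c_{j,n}(\tau_1,\tau_2)\big|
\leq C_{N,\beta_1,\beta_2}(1+|n|)^{-N}(1+|\tau_1|+|\tau_2|)^{-\beta_1-\beta_2}
\]
for every $N\geq 0$ uniformly in $j$, and $\mathop{\mathrm{supp}} c_{j,n}\subseteq\{|\tau_1|\leq c|\tau_2|\}$. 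After peeling off a low-frequency piece supported in $|\tau|\lesssim 1$ (which contributes a smoothing convolution operator with a Schwartz kernel, easily controlled), each remaining $c_{j,n}$ fits the hypotheses of Theorem~\ref{theorem2}, whence $\|T_{c_{j,n}}(f,g)\|_{\mathrm{L}^{r}_{y}(\mathrm{W}^{s,r}_{x})}\lesssim_{N} (1+|n|)^{-N}\|f\|_{\mathrm{L}^p}\|g\|_{\mathrm{W}^{s,q}}$ uniformly in $j$.

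Reassembling yields
\[
T_\sigma(f,g)(x,y)=\sum_{n\in\mathbb{Z}^2}\sum_{j\in\mathbb{Z}^2}\widetilde\chi(x-j_1,y-j_2)\,e^{2\pi i n\cdot(x-j_1,y-j_2)/L}\,T_{c_{j,n}}(f,g)(x,y).
\]
The triangle inequality in $n$ absorbs the polynomial loss of at most $(1+|n|)^{s+1}$ produced by $x$-differentiating the modulation and the cutoff into the rapid decay $(1+|n|)^{-N}$. The inner $j$-sum has bounded overlap in $(x,y)$, suggesting an $\ell^r$-type summation of the localized Sobolev norms. The main obstacle is that the global bound coming from Theorem~\ref{theorem2} is too crude to sum over $j\in\mathbb{Z}^2$: one must instead estimate $\widetilde\chi(\cdot-j)T_{c_{j,n}}(f,g)$ by \emph{local} $\mathrm{L}^p$ and $\mathrm{W}^{s,q}$ norms of $f$ and $g$ on a bounded neighborhood of $j$. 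This is the delicate point, and is handled via an off-diagonal decay estimate for the bilinear kernel of $T_{c_{j,n}}$ implied by the symbol bounds together with the half-plane support condition. Once such a local-to-global upgrade is available, a standard H\"older-type argument sums over $j\in\mathbb{Z}^2$ (using $1/p+1/q=1/r$) and recovers the global $\|f\|_{\mathrm{L}^p}\|g\|_{\mathrm{W}^{s,q}}$. I also expect that some care is needed to realize the $\ell^r$-summation of fractional $\mathrm{W}^{s,r}_x$ norms of the $\widetilde\chi(\cdot-j)$-pieces, a point at which the characterization of $\mathrm{W}^{s,r}$ by Littlewood--Paley square functions or differences becomes useful.
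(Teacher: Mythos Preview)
Your proposal is correct and follows essentially the same route as the paper: both reduce to the $(x,y)$-independent case via the Coifman--Meyer freezing device, and both identify the same nontrivial ingredient, namely the local estimate with off-diagonal decay (the paper's display~(\ref{eqthm1estimateloc})), which you correctly trace back to kernel decay for the constituent twisted paraproducts. The paper quotes the freezing argument as a black box from \cite{CM} and \cite[Theorem~2.4]{B} and phrases the local estimate directly for unit balls, whereas you unfold the mechanism explicitly via partition of unity plus Fourier expansion in $(x,y)$; these are two presentations of the same argument, and your remaining caveats (the low-frequency excision to pass from non-homogeneous to homogeneous symbol bounds, and the $\ell^r$-summation of localized fractional Sobolev norms) are exactly the routine details the paper also glosses over.
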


No extra work will be needed for the proof of Theorem \ref{theorem3} as it is derived from the previous result using
a variant the Coifman-Meyer ``freezing argument'' from \cite{CM}.

\smallskip
Let us comment on our motivation for formulating Theorems \ref{theorem2} and \ref{theorem3}.
We wanted to point out analogies between bilinear operators defined by (\ref{eqnewmultiplier}) and (\ref{eqxydependentoperator}) on the one side,
and the Coifman-Meyer multipliers \cite{CoMe},\cite{CM2} and bilinear pseudodifferential operators \cite{BNT},\cite{BT},\cite{B} on the other.
Even though most of the reductions we perform can already be found in the literature, the auxiliary results we rely on
were established by somewhat different techniques in \cite{Ber} and \cite{Kov}, so the remarked connections might not be instantly visible.

\section{Proof of Proposition 1}

We begin with a quite expected $\mathrm{L}^\infty$ bound on the symbol required for any $\mathrm{L}^p$ estimates.

\begin{lemma} \label{lemma} A necessary condition for a bilinear Fourier multiplier $T_\mu$ defined by \emph{(\ref{eqbilinfourmulti})} to be bounded from $\mathrm{L}^p\times\mathrm{L}^q$ to $\mathrm{L}^r$
for some choice of $1<p,q,r<\infty$ such that $\frac{1}{r}=\frac{1}{p}+\frac{1}{q}$ is that its symbol $\mu$ is a.e.\@ bounded. Moreover, we have
\begin{equation}\label{eqsymbollinfty}
\|\mu\|_{\mathrm{L}^\infty} \leq \|T_\mu\|_{\mathrm{L}^p \times \mathrm{L}^q \to \mathrm{L}^r}.
\end{equation}
\end{lemma}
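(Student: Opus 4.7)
The plan is to test the bilinear inequality on modulated dilates of a fixed Schwartz triple and to extract the pointwise values of $\mu$ from the Parseval representation
$$\langle T_\mu(f,g),h\rangle \;=\; \int_{\mathbb{R}^4}\mu(\xi,\eta)\,\widehat f(\xi)\widehat g(\eta)\overline{\widehat h(\xi+\eta)}\,d\xi\,d\eta\qquad(\xi,\eta\in\mathbb{R}^2)$$
of the trilinear form. For a fixed Lebesgue point $(\xi^{\circ},\eta^{\circ})$ of $\mu$ and three Schwartz bumps $\phi,\psi,\chi\in\mathcal{S}(\mathbb{R}^2)$ I will substitute the concentrated modulated triple
$$f_\varepsilon(z):=\phi(\varepsilon z)\,e^{2\pi i\xi^{\circ}\cdot z},\quad g_\varepsilon(z):=\psi(\varepsilon z)\,e^{2\pi i\eta^{\circ}\cdot z},\quad h_\varepsilon(z):=\chi(\varepsilon z)\,e^{2\pi i(\xi^{\circ}+\eta^{\circ})\cdot z},$$
whose Fourier supports shrink to $\{\xi^{\circ}\}$, $\{\eta^{\circ}\}$, and $\{\xi^{\circ}+\eta^{\circ}\}$ as $\varepsilon\to 0$.

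After plugging these into the Parseval formula and changing variables $u=(\xi-\xi^{\circ})/\varepsilon$, $v=(\eta-\eta^{\circ})/\varepsilon$, the pairing rewrites as
$$\langle T_\mu(f_\varepsilon,g_\varepsilon),h_\varepsilon\rangle \;=\; \varepsilon^{-2}\!\int_{\mathbb{R}^4}\mu(\xi^{\circ}+\varepsilon u,\eta^{\circ}+\varepsilon v)\,\widehat\phi(u)\widehat\psi(v)\overline{\widehat\chi(u+v)}\,du\,dv,$$
while a direct rescaling gives $\|f_\varepsilon\|_{\mathrm{L}^p}\|g_\varepsilon\|_{\mathrm{L}^q}\|h_\varepsilon\|_{\mathrm{L}^{r'}}=\varepsilon^{-2}\|\phi\|_{\mathrm{L}^p}\|\psi\|_{\mathrm{L}^q}\|\chi\|_{\mathrm{L}^{r'}}$, the exponent $-2$ coming from $\tfrac{1}{p}+\tfrac{1}{q}+\tfrac{1}{r'}=1$. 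The $\varepsilon^{-2}$ factors cancel between the pairing and the trilinear-form bound $|\langle T_\mu(f_\varepsilon,g_\varepsilon),h_\varepsilon\rangle|\leq\|T_\mu\|\,\|f_\varepsilon\|_{\mathrm{L}^p}\|g_\varepsilon\|_{\mathrm{L}^q}\|h_\varepsilon\|_{\mathrm{L}^{r'}}$. Letting $\varepsilon\to 0$ and invoking Lebesgue's differentiation theorem, applicable because the Schwartz kernel $\widehat\phi(u)\widehat\psi(v)\overline{\widehat\chi(u+v)}$ admits a radially decreasing $L^1(\mathbb{R}^4)$ majorant, the inner integral converges to $\mu(\xi^{\circ},\eta^{\circ})\int\widehat\phi(u)\widehat\psi(v)\overline{\widehat\chi(u+v)}du\,dv$, which Plancherel identifies with $\mu(\xi^{\circ},\eta^{\circ})\int\phi\psi\overline\chi\,dz$. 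Passing to the supremum over $\chi\in\mathcal{S}(\mathbb{R}^2)$ with $\|\chi\|_{\mathrm{L}^{r'}}\leq 1$ now yields
$$|\mu(\xi^{\circ},\eta^{\circ})|\,\|\phi\psi\|_{\mathrm{L}^r}\;\leq\;\|T_\mu\|_{\mathrm{L}^p\times\mathrm{L}^q\to\mathrm{L}^r}\,\|\phi\|_{\mathrm{L}^p}\|\psi\|_{\mathrm{L}^q}.$$

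To close I only need to saturate H\"older's inequality $\|\phi\psi\|_{\mathrm{L}^r}\leq\|\phi\|_{\mathrm{L}^p}\|\psi\|_{\mathrm{L}^q}$ within the Schwartz class. Choosing the Gaussians $\phi(z):=e^{-\pi q|z|^2}$ and $\psi(z):=e^{-\pi p|z|^2}$ gives $|\phi|^p=|\psi|^q=e^{-\pi pq|z|^2}$, which is the equality case, and a brief calculation using $r(p+q)=pq$ shows that both $\|\phi\psi\|_{\mathrm{L}^r}$ and $\|\phi\|_{\mathrm{L}^p}\|\psi\|_{\mathrm{L}^q}$ equal $(pq)^{-1/r}$. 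Thus $|\mu(\xi^{\circ},\eta^{\circ})|\leq\|T_\mu\|_{\mathrm{L}^p\times\mathrm{L}^q\to\mathrm{L}^r}$ at every Lebesgue point, which is \eqref{eqsymbollinfty}. The one delicate point in the plan is the $\varepsilon\to 0$ averaging step, which tacitly uses that $\mu\in L^1_{\mathrm{loc}}$ -- a mild prerequisite needed even to make sense of the multiplier $T_\mu$ on Schwartz inputs.
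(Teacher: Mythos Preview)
Your proof is correct and follows essentially the same route as the paper's: both test the trilinear form on modulated dilates of fixed Schwartz bumps, use the H\"older relation $\tfrac{1}{p}+\tfrac{1}{q}+\tfrac{1}{r'}=1$ to cancel the scaling factors, and pass to the limit via Lebesgue differentiation. The only cosmetic difference is in how the normalizing constant is made equal to $1$: the paper picks $\Phi$ with $\widehat{\Phi}$ compactly supported and $\Phi(0,0)=1$ together with $\Psi$ satisfying $\widehat{\Psi}\equiv 1$ on the relevant support, so the kernel integrates to $1$ outright, whereas you keep generic bumps and recover the sharp constant afterwards by taking the supremum over $\chi$ and choosing Gaussians that saturate H\"older's inequality.
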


\begin{proof}[Proof of Lemma \ref{lemma}]
Let us fix two points $\xi^0=(\xi^0_1,\xi^0_2)$ and $\eta^0=(\eta^0_1,\eta^0_2)$ in ${\mathbb R}^2$. Choose a Schwartz function $\Phi$ with Fourier transform compactly supported in $[-1,1]^2$ and satisfying $\Phi(0,0)=1$.
Also choose another Schwartz function $\Psi$ such that $\widehat{\Psi}=1$ on $[-2,2]^2$. In particular,
\begin{equation}\label{eqphipsiaux}
\int_{\mathbb{R}^2}\int_{\mathbb{R}^2} \widehat{\Phi}(\xi) \widehat{\Phi}(\eta) \widehat{\Psi}(\xi+\eta) \, d\xi d\eta = \int_{\mathbb{R}^2}\int_{\mathbb{R}^2} \widehat{\Phi}(\xi) \widehat{\Phi}(\eta) \, d\xi d\eta = \Phi(0,0)^2=1.
\end{equation}
Consider the functions defined by
$$ f_\epsilon(x,y) := e^{2\pi i(\xi^0_1 x + \xi^0_2 y)} \Phi(\epsilon x,\epsilon y),
\quad g_\epsilon(x,y) := e^{2\pi i(\eta^0_1 x + \eta^0_2 y)} \Phi(\epsilon x,\epsilon y), $$
and
$$ h_\epsilon(x,y) :=  \epsilon^2 e^{2\pi i\left((\xi^0_1+\eta^0_1)x + (\xi^0_2+\eta^0_2)y\right)} \Psi(\epsilon x,\epsilon y) $$
for $\epsilon>0$. Then we have
$$ \|f_\epsilon\|_{\mathrm{L}^p} \sim \epsilon^{-\frac{2}{p}}, \quad \|g_\epsilon\|_{\mathrm{L}^q} \sim \epsilon^{-\frac{2}{q}}, \quad \textrm{and} \quad \|h_\epsilon\|_{\mathrm{L}^{r'}} \sim \epsilon^{\frac{2}{r}} , $$
so the assumed boundedness of $T_\mu$ implies the estimate
\begin{equation} \left| \langle T_\mu(f_\epsilon , g_\epsilon ),h_\epsilon\rangle \right| \,\lesssim\, \|T_\mu\|_{\mathrm{L}^p \times \mathrm{L}^q \to \mathrm{L}^r} \,\epsilon^{-\frac{2}{p}-\frac{2}{q}+\frac{2}{r}} \,=\, \|T_\mu\|_{\mathrm{L}^p \times \mathrm{L}^q \to \mathrm{L}^r} \label{equni} \end{equation}
uniformly in $\epsilon>0$.
However, using Plancherel's identity we obtain
\begin{align*}
\langle T_\mu(f_\epsilon , g_\epsilon ),h_\epsilon\rangle & = \int_{\mathbb{R}^2}\int_{\mathbb{R}^2} \mu(\xi,\eta) \,\epsilon^{-2}\widehat{\Phi}\big(\epsilon^{-1}(\xi\!-\!\xi^0)\big) \,\epsilon^{-2}\widehat{\Phi}\big(\epsilon^{-1}(\eta\!-\!\eta^0)\big) \,\widehat{\Psi}\big(\epsilon^{-1}(\xi\!+\!\eta\!-\!\xi^0\!-\!\eta^0)\big) \,d\xi d\eta \\
& = \int_{\mathbb{R}^2}\int_{\mathbb{R}^2} \mu(\xi^0+\epsilon\xi ,\eta^0+\epsilon \eta) \,\widehat{\Phi}(\xi) \,\widehat{\Phi}(\eta) \,\widehat{\Psi}(\xi+\eta) \, d\xi d\eta,
\end{align*}
which in a combination with (\ref{eqphipsiaux}) proves that for almost every $(\xi^0,\eta^0)\in\mathbb{R}^2\times\mathbb{R}^2$
$$ \lim_{\epsilon\to 0} \  \langle T_\mu(f_\epsilon , g_\epsilon ),h_\epsilon\rangle = \mu(\xi^0,\eta^0). $$
From (\ref{equni}) we finally obtain (\ref{eqsymbollinfty}).
\end{proof}

\begin{proof}[Proof of Proposition \ref{prop1}]
Observe that boundedness of $T_m$ from  ${\mathrm{L}^p} \times {\mathrm{W}^{s,q}}$ to $\mathrm{L}^{r}_{y}(\mathrm{W}^{s,r}_{x})$ is equivalent to boundedness of $T_{\nu}$ from ${\mathrm{L}^p} \times {\mathrm{L}^q}$ to $\mathrm{L}^{r}$,
where $T_\nu$ is a multiplier with the new symbol
$$ \nu(\xi_1,\xi_2,\eta_1,\eta_2):=m(\xi_1,\eta_2) \left(\frac{1+4\pi^2(\xi_1+\eta_1)^2}{1+4\pi^2(\eta_1^2+\eta_2^2)}\right)^{s/2}. $$
Applying Lemma \ref{lemma} we get that
$$ \|\nu\|_{\mathrm{L}^\infty} \leq \|T_\nu\|_{{\mathrm{L}^p} \times {\mathrm{L}^{q}} \to \mathrm{L}^{r}}
=\|T_m\|_{{\mathrm{L}^p} \times {\mathrm{W}^{s,q}} \to \mathrm{L}^{r}_{y}(\mathrm{W}^{s,r}_{x})} =: C_m .$$
In particular, for $\xi_1,\eta_2$ satisfying $|m(\xi_1,\eta_2)|\geq \gamma>0$ by choosing $\eta_1=0$ we obtain
$$ \frac{|\xi_1|}{1+|\eta_2|} \lesssim \left(\frac{1+4\pi^2\xi_1^2}{1+4\pi^2\eta_2^2}\right)^{1/2} \leq \left(C_m \gamma^{-1}\right)^{1/s},$$
which yields the desired support condition.
\end{proof}

\section{Proof of Theorem 2}
\label{sectionproofthm2}

As we have already announced, the first step is to establish (\ref{eqthm1estimate}) for a slight variation of the twisted paraproduct. Let
$$ \|\varphi\|_{\alpha,\beta} := \sup_{t\in\mathbb{R}}\,|t|^\alpha |\partial^{\beta}_{t}\varphi(t)|
\quad\textrm{for integers } \alpha,\beta\geq 0 $$
denote the Schwartz seminorms of $\varphi\in\mathcal{S}(\mathbb{R})$.
For any $\varphi,\psi\in\mathcal{S}(\mathbb{R})$ and for a family of complex coefficients $\lambda=(\lambda_k)_{k\in\mathbb{Z}}$
we define a variant of (\ref{eqtwistedpar}) by
\begin{equation}\label{eqtwistedpar2}
\Pi_{\varphi,\psi,\lambda}(f,g) := \sum_{k\in\mathbb{Z}} \lambda_k \,\mathrm{P}^{\varphi_k}_{x}f \ \mathrm{P}^{\psi_k}_{y}g ,
\end{equation}
where
$$ \varphi_k(t)=2^k\varphi(2^k t), \quad \psi_k(t)=2^k\psi(2^k t) $$
and
$$ \mathrm{P}^{\varphi_k}_{x}f(x,y) = \int_{\mathbb{R}}f(x-t,y)\varphi_k(t)dt,
\quad \mathrm{P}^{\psi_k}_{y}g(x,y) = \int_{\mathbb{R}}g(x,y-t)\psi_k(t)dt . $$

For the purpose of performing Littlewood-Paley decompositions, we fix a nonnegative, even, smooth function $\theta$
which is also decreasing on $[0,\infty)$, equals $1$ on $[0,\frac{1}{2}]$, and equals $0$ on $[1,\infty)$.
It will be understood that all constants depend on the choice of $\theta$.
Define $\vartheta$ by $\vartheta(\tau)=\theta(\frac{\tau}{2})-\theta(\tau)$.
Note that $\theta(2^{-k}\tau)$ is supported in $|\tau|\leq 2^k$ and that
$\vartheta(2^{-k}\tau)$ is supported in $2^{k-1}\!\!\leq\!|\tau|\!\leq\!2^{k+1}$.
Write $\Delta_k$ for a multiplier associated with the symbol $\vartheta(2^{-k}\tau)$.
Each Littlewood-Paley projection we use will be one-dimensional and we will denote the variable to which it relates in a subscript.

\begin{lemma}\label{lemma1}
Suppose that $\widehat{\varphi}(\xi)$ is supported in $|\xi|\leq c$ for some $c>0$ and that $\widehat{\psi}(\xi)$ is supported in $\frac{1}{2}\leq|\xi|\leq 2$.
Operators \emph{(\ref{eqtwistedpar2})} satisfy
$$ \|\Pi_{\varphi,\psi,\lambda}(f,g)\|_{\mathrm{L}^{r}_{y}(\mathrm{W}^{s,r}_{x})}
\,\lesssim_{c,p,q,r,s} \Big(\max_{0\leq\alpha,\beta\leq 3}\|\varphi\|_{\alpha,\beta}\Big) \Big(\max_{0\leq\alpha,\beta\leq 3}\|\psi\|_{\alpha,\beta}\Big)
\Big(\sup_{k\in\mathbb{Z}}|\lambda_k|\Big) \,\|f\|_{\mathrm{L}^p} \|g\|_{\mathrm{W}^{s,q}} $$
whenever \,$1<p,q,r<\infty$, \,$\frac{1}{p}+\frac{1}{q}=\frac{1}{r}>\frac{1}{2}$, \,$s\geq 0$.
\end{lemma}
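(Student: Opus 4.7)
The plan is to bring the Sobolev operator $(I-\triangle_x)^{s/2}$ from the output through to the input $g$ by Littlewood--Paley decomposing $g$ in the $x$-frequency variable and splitting the bilinear sum according to the relative size of the frequency scale of $g$ and the paraproduct index $k$. A useful preliminary reduction is to replace $\|g\|_{\mathrm{W}^{s,q}}$ on the right-hand side by the sum of partial norms $\|(I-\triangle_x)^{s/2}g\|_{\mathrm{L}^q}+\|(I-\triangle_y)^{s/2}g\|_{\mathrm{L}^q}$; this is legitimate because both $(I-\triangle_x)^{s/2}(I-\triangle)^{-s/2}$ and $(I-\triangle_y)^{s/2}(I-\triangle)^{-s/2}$ are H\"ormander--Mikhlin multipliers, hence bounded on $\mathrm{L}^q$.

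I then write $g=\sum_l\Delta^x_l g$ and split $\Pi_{\varphi,\psi,\lambda}(f,g)=\Pi_A(f,g)+\Pi_B(f,g)$ into a ``high'' piece $\Pi_A$ collecting pairs $(k,l)$ with $l>k+K$ and a ``low'' piece $\Pi_B$ collecting pairs with $l\le k+K$, where $K$ is chosen so that $c\,2^k\le 2^{l-2}$ in the former regime. In $\Pi_A$, each summand $\lambda_k\mathrm{P}^{\varphi_k}_xf\cdot\mathrm{P}^{\psi_k}_y\Delta^x_l g$ has $x$-Fourier support concentrated at scale $2^l$, so after summing over $k<l-K$ the resulting pieces $A_l$ form a Littlewood--Paley decomposition in $x$. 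The $x$-square-function characterization of $\mathrm{L}^r$, the identity $2^{ls}\Delta^x_l=\tilde\Delta^x_l(I-\triangle_x)^{s/2}$ on the support of $\Delta^x_l$ (with $\tilde\Delta^x_l$ a modified projection uniformly bounded in $l$), and a vector-valued $\ell^2$-extension of Kova\v{c}'s Theorem~1 applied to the truncated twisted paraproducts $\tilde\Pi_l(f,H):=\sum_{k<l-K}\lambda_k\mathrm{P}^{\varphi_k}_xf\cdot\mathrm{P}^{\psi_k}_yH$ (uniformly bounded since $\sup_k|\lambda_k\mathbf{1}_{k<l-K}|\le\sup_k|\lambda_k|$) together produce $\|(I-\triangle_x)^{s/2}\Pi_A(f,g)\|_{\mathrm{L}^r}\lesssim\|f\|_{\mathrm{L}^p}\|(I-\triangle_x)^{s/2}g\|_{\mathrm{L}^q}$.

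For $\Pi_B$, rearranging by $k$ yields $\Pi_B(f,g)=\sum_k\lambda_k\mathrm{P}^{\varphi_k}_xf\cdot\mathrm{P}^{\psi_k}_y\mathrm{P}^{x,\le k+K}g$, and each summand has $x$-Fourier support in $|\xi_1|\le M\,2^k$ with $M=c+2^{K+1}$. Consequently, on that support $(I-\triangle_x)^{s/2}$ acts as the scalar $(1+2^{2k})^{s/2}$ times a smooth $x$-Fourier multiplier $L^{(k)}_s$ that is bounded on $\mathrm{L}^r$ uniformly in $k$. The crucial observation is that on the $\eta_2$-support of $\widehat{\psi_k}$ the factor $(1+2^{2k})^{s/2}$ is comparable to $(1+4\pi^2\eta_2^2)^{s/2}$, so one has the exact operator identity
$$(1+2^{2k})^{s/2}\,\mathrm{P}^{\psi_k}_y\,=\,\mathrm{P}^{\bar\psi^{(k)}_k}_y\,(I-\triangle_y)^{s/2},$$
where $\{\bar\psi^{(k)}\}_{k\in\mathbb{Z}}$ is a family of Schwartz functions with $\widehat{\bar\psi^{(k)}}$ supported in $[-2,-\tfrac12]\cup[\tfrac12,2]$ and Schwartz seminorms bounded uniformly in $k$. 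Substituting this identity, commuting $(I-\triangle_y)^{s/2}$ past the $x$-operators, and absorbing $L^{(k)}_s$ and the cut-off $\mathrm{P}^{x,\le k+K}$ into the structure of a modified truncated twisted paraproduct, Kova\v{c}'s Theorem~1 applied to $(f,(I-\triangle_y)^{s/2}g)$ delivers $\|(I-\triangle_x)^{s/2}\Pi_B(f,g)\|_{\mathrm{L}^r}\lesssim\|f\|_{\mathrm{L}^p}\|(I-\triangle_y)^{s/2}g\|_{\mathrm{L}^q}$, and combining the two pieces finishes the argument.

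The main obstacle is the analysis of $\Pi_B$: one must verify the ``$y$-Sobolev swap'' with uniform control over the $k$-dependent Schwartz family $\bar\psi^{(k)}$, correctly handle the additional $\mathrm{P}^{x,\le k+K}$ cut-off within the paraproduct structure, and absorb the $k$-dependent multipliers $L^{(k)}_s$ without losing the $\mathrm{L}^r$ bound. The vector-valued $\ell^2$-extension invoked in the treatment of $\Pi_A$ also merits a short justification via standard principles for Calder\'on--Zygmund-type bilinear operators of the kind studied in \cite{Kov}.
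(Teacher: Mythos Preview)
Your overall strategy---decompose $g$ in the $x$-frequency variable and split into a ``high'' piece $\Pi_A$ and a ``low'' piece $\Pi_B$---is reasonable and is a cousin of the paper's second proof, which instead Littlewood--Paley decomposes the \emph{output} in $x$. But there is a genuine gap in your handling of $\Pi_B$.

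After your $y$-Sobolev swap you arrive at
\[
(I-\triangle_x)^{s/2}\Pi_B(f,g)=\sum_{k} \lambda_k\, L^{(k)}_s\!\Big[\mathrm{P}^{\varphi_k}_x f \cdot \mathrm{P}^{\bar\psi^{(k)}_k}_y \mathrm{P}^{x,\le k+K} G\Big],\qquad G=(I-\triangle_y)^{s/2}g,
\]
where $L^{(k)}_s$ acts on the \emph{product}, not on either factor. This is not a twisted paraproduct: \cite[Theorem~1]{Kov} concerns sums $\sum_k a_k(f)\,b_k(g)$ with $a_k,b_k$ one-variable convolutions in $x$ and $y$ respectively, and says nothing about $\sum_k L_k[a_k(f)\,b_k(g)]$ with an extra $x$-multiplier $L_k$ wrapped around each term. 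Nor can $L^{(k)}_s$ be pushed onto one factor, since its symbol $\big((1+4\pi^2\xi^2)/(1+2^{2k})\big)^{s/2}$ varies from $\sim 2^{-ks}$ to $\sim 1$ across $|\xi|\le M\,2^k$ when $k$ is large. So ``absorbing $L^{(k)}_s$ into the structure of a modified twisted paraproduct'' is not a cosmetic move---it is the whole difficulty, and your sketch does not explain how it is done. The same obstruction feeds back into $\Pi_A$: the $\ell^2$-extension you invoke is for an \emph{$l$-dependent} family $\tilde\Pi_l$, and Khinchine does not collapse $\sum_l\epsilon_l\tilde\Pi_l(f,H_l)$ to a single twisted paraproduct; unwinding it produces a $\Pi_B$-type remainder.

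The paper avoids ever placing the Sobolev weight as a post-multiplier on the product. In the first proof it restricts to integer $s$ and uses the Leibniz rule, so that $\partial_x^s$ lands on the factors: the $\beta=0$ term is exactly $\Pi_{\varphi,\psi,\lambda}(f,\partial_x^s g)$, while for $\beta\ge1$ the derivative hits $\mathrm{P}^{\varphi_k}_x f$, converting the $\varphi$-bump into a $\psi$-type bump at scale $2^k$, after which Cauchy--Schwarz in $k$ and one-dimensional Littlewood--Paley in each variable finish the job---no twisted-paraproduct bound is needed for those terms. In the second proof the output is decomposed as $\Delta_{\ell,x}\Pi(f,g)$; only the piece where the $g$-factor carries the high $x$-frequency reduces to an $\ell$-independent twisted paraproduct $\Pi_{\varphi,\psi,\lambda}(f,\Delta_{\ell,x}g)$, and the remaining pieces are handled by maximal-function and square-function arguments alone. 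Either route would repair your argument; as written, the $\Pi_B$ step does not go through.
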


We will present two slightly different proofs.
The first one will be a bit more elegant but it will prefer integer values of $s$, while the second one will address general $s\geq 0$ directly and avoid the use of interpolation.

\begin{proof}[First proof of Lemma \ref{lemma1}]
By linearity of Expression (\ref{eqtwistedpar2}) in $\varphi$, $\psi$, and $\lambda$ we may assume that\linebreak
$\|\varphi\|_{\alpha,\beta},\|\psi\|_{\alpha,\beta}\leq 1$ for $0\leq\alpha,\beta\leq 3$
and that $|\lambda_k|\leq 1$ for $k\in\mathbb{Z}$.
Next, we make use of non-isotropic dilations
$$ \mathrm{D}_{a}h(x,y) := h(2^{-a}x,y) $$
to reduce to the case when $c=1$. Indeed, if $a$ is an integer such that $2^a\geq c$, then by writing
$$ \mathrm{D}_{a}\Pi_{\varphi,\psi,\lambda}(f,g)
= \sum_{k\in\mathbb{Z}} \lambda_k \ \mathrm{P}^{\varphi_{k-a}}_{x}\mathrm{D}_{a}f \ \mathrm{P}^{\psi_k}_{y}\mathrm{D}_{a}g $$
we effectively replace $\varphi(t)$ with $\varphi_{-a}(t)=2^{-a}\varphi(2^{-a}t)$ in the desired estimate.
The Sobolev norms in question can change up to a constant that depends on $s$ and $c$, but this is allowed.
It remains to observe that $\widehat{\varphi}_{-a}(\xi)$ is now supported in $|\xi|\leq 1$.

Because of $s\geq 0$ we have
$$ \|h\|_{\mathrm{L}^{r}_{y}(\mathrm{W}^{s,r}_{x})} \sim_{r,s}
\big\|\|h\|_{\mathrm{L}^{r}_{x}(\mathbb{R})}+\|(-\triangle_x)^{s/2}h\|_{\mathrm{L}^{r}_{x}(\mathbb{R})}\big\|_{\mathrm{L}^{r}_{y}(\mathbb{R})}
\sim_{r,s} \|h\|_{\mathrm{L}^{r}(\mathbb{R}^2)} + \|\partial^{s}_{x}h\|_{\mathrm{L}^{r}(\mathbb{R}^2)} , $$
so one actually needs to bound $\mathrm{L}^r$ norms of
$\Pi_{\varphi,\psi,\lambda}(f,g)$ and $\partial^{s}_{x}\Pi_{\varphi,\psi,\lambda}(f,g)$.
In this proof we find convenient to work with integer values of $s$.
Arguments very similar to the complex interpolation of Sobolev spaces can then extend the inequality to general nonnegative $s$;
for example see \cite{BL}.

The estimate
\begin{equation}\label{eqproof1order0}
\|\Pi_{\varphi,\psi,\lambda}(f,g)\|_{\mathrm{L}^{r}} \lesssim_{p,q,r}
\|f\|_{\mathrm{L}^p} \|g\|_{\mathrm{L}^q} \lesssim_s \|f\|_{\mathrm{L}^p} \|g\|_{\mathrm{W}^{s,q}}
\end{equation}
actually follows from \cite[Theorem 1]{Kov}.
The only difference is the existence of coefficients $\lambda_k$ in the definition of (\ref{eqtwistedpar2}).
However, one can insert arbitrary bounded coefficients in Equation (6.6) of \cite[Section 6]{Kov}
and hence arbitrary bounded coefficients can also appear in the definition of $T_\mathrm{c}$ in \cite[Section 1]{Kov}.

We turn to bounding the higher derivatives and for this we apply the product rule,
\begin{equation}\label{eqproof1productexp}
\partial^{s}_{x}\Pi_{\varphi,\psi,\lambda}(f,g)(x,y)
= \sum_{\beta =0}^{s} {s \choose \beta } \sum_{k\in\mathbb{Z}} \lambda_k \,\partial^{\beta }_{x}\mathrm{P}^{\varphi_k}_{x}f(x,y)
\,\mathrm{P}^{\psi_k}_{y}\partial^{s-\beta }_{x}g(x,y) .
\end{equation}
The term for $\beta =0$ is precisely \,$\Pi_{\varphi,\psi,\lambda}(f,\partial^{s}_{x}g)$\, and (\ref{eqproof1order0}) implies
\begin{equation}\label{eqproof1orderspart1}
\|\Pi_{\varphi,\psi,\lambda}(f,\partial^{s}_{x}g)\|_{\mathrm{L}^{r}} \lesssim_{p,q,r}
\|f\|_{\mathrm{L}^p} \|\partial^{s}_{x}g\|_{\mathrm{L}^q} \lesssim_s \|f\|_{\mathrm{L}^p} \|g\|_{\mathrm{W}^{s,q}} .
\end{equation}
In the rest we work with a fixed integer $\beta $ such that $1\leq \beta \leq s$.
We have
$$ \partial^{\beta }_{x}\mathrm{P}^{\varphi_k}_{x}f
= \sum_{j=-\infty}^{0} \partial^{\beta }_{x}\Delta_{k+j,x}\mathrm{P}^{\varphi_k}_{x}f
= \sum_{j=-\infty}^{0} 2^{\beta (k+j)} \mathrm{P}^{\tilde{\psi}_{k,j}}_{x}f , $$
where the functions $\tilde{\psi}_{k,j}$ are defined by
$$ (\tilde{\psi}_{k,j})\hat{\rule{0mm}{2mm}}(\xi) = (2\pi i)^{\beta }\Big(\frac{\xi}{2^{k+j}}\Big)^{\beta }\vartheta(2^{-k-j}\xi)\widehat{\varphi}_{k}(\xi) . $$
Observe that the Fourier transform of $\tilde{\psi}_{k,j}$ is a smooth function adapted to the annulus $2^{k+j-1}\leq|\xi|\leq 2^{k+j+1}$.
After expanding
$$ \sum_{k\in\mathbb{Z}} \lambda_k\, \partial^{\beta }_{x}\mathrm{P}^{\varphi_k}_{x}f
\ \mathrm{P}^{\psi_k}_{y}\partial^{s-\beta }_{x}g
= \sum_{j=-\infty}^{0}2^{\beta j}\,\sum_{k\in\mathbb{Z}} \lambda_k\, \mathrm{P}^{\tilde{\psi}_{k,j}}_{x}f
\ 2^{\beta k}\mathrm{P}^{\psi_k}_{y}\partial^{s-\beta }_{x}g $$
we apply the Cauchy-Schwarz inequality in $k\in\mathbb{Z}$.
Using the classical Littlewood-Paley inequalities in $x$ and $y$ variables respectively, we obtain
$$ \Big\|\Big(\sum_{k\in\mathbb{Z}}\big|\mathrm{P}^{\tilde{\psi}_{k,j}}_{x}f\big|^{2}\Big)^{1/2}\Big\|_{\mathrm{L}^p}
\lesssim \|f\|_{\mathrm{L}^p} $$
and
$$ \Big\|\Big(\sum_{k\in\mathbb{Z}}\big|2^{\beta k}\mathrm{P}^{\psi_k}_{y}\partial^{s-\beta }_{x}g\big|^2\Big)^{1/2}\Big\|_{\mathrm{L}^{q}}
\lesssim \big\|\partial^{\beta }_{y}\partial^{s-\beta }_{x}g\big\|_{\mathrm{L}^q} , $$
so summing in $j$ finally gives
\begin{equation}\label{eqproof1orderspart2}
\Big\|\sum_{k\in\mathbb{Z}} \lambda_k \,\partial^{\beta }_{x}\mathrm{P}^{\varphi_k}_{x}f
\,\mathrm{P}^{\psi_k}_{y}\partial^{s-\beta }_{x}g\Big\|_{\mathrm{L}^{r}} \lesssim_{p,q,r,s,\beta}
\|f\|_{\mathrm{L}^p} \|\partial^{\beta }_{y}\partial^{s-\beta }_{x}g\|_{\mathrm{L}^q} .
\end{equation}
Combining Expansion (\ref{eqproof1productexp}) with Estimates (\ref{eqproof1order0}), (\ref{eqproof1orderspart1}), and (\ref{eqproof1orderspart2})
we complete the proof of Lemma \ref{lemma1}.
\end{proof}

\begin{proof}[Alternative proof of Lemma \ref{lemma1}]
Begin with the same normalizations and reductions as before.
This time we let $\Delta_k$ denote a general smooth $1$-dimensional Littlewood-Paley truncation at frequency scale $2^k$.
We do not insist on the fixed choice of truncations from the beginning of this section, but rather allow the smooth cutoffs to change
from expression to expression and from line to line, and still use the generic notation $\Delta_k$.
Whenever we have an equality that holds up to the change of the Littlewood-Paley cutoffs, we write it using the sign $\simeq$.

Because of $s\geq 0$, we have
$$ \|h\|_{\mathrm{L}^{r}_{y}(\mathrm{W}^{s,r}_{x})} \sim_{r,s} \|h\|_{\mathrm{L}^{r}(\mathbb{R}^2)} +
\Big\| \Big(\sum_{\ell \geq 0} 2^{2\ell s} |\Delta_{\ell,x} h|^2\Big)^{\frac{1}{2}} \Big\|_{\mathrm{L}^{r}(\mathbb{R}^2)}. $$
Since the $\mathrm{L}^r$ norm of $\Pi_{\varphi,\psi,\lambda}(f,g)$ is controlled by (\ref{eqproof1order0}), one actually needs to study\linebreak
$\Delta_{\ell,x}\Pi_{\varphi,\psi,\lambda}(f,g)$.
As usually, for two functions $h_1,h_2$ we have
\begin{align*}
 \Delta_{\ell}(h_1 h_2) & = \sum_{j_1,j_2} \Delta_{\ell } \left(\Delta_{j_1} (h_1) \Delta_{j_2} (h_2)\right) \\
  & \simeq \sum_{|\max(j_1,j_2) -\ell|\leq 4} \Delta_{\ell } \left(\Delta_{j_1} (h_1) \Delta_{j_2} (h_2)\right) + \sum_{\substack{|j_1-j_2|\leq 4\\ \ell \leq j_1,j_2}} \Delta_{\ell } \left(\Delta_{j_1} (h_1) \Delta_{j_2} (h_2)\right).
  \end{align*}
Applying the above decomposition to $\Delta_{\ell,x}\Pi_{\varphi,\psi,\lambda}(f,g)$ we obtain these three terms:
$$ I_\ell^1:= \sum_{k} \sum_{j_2\leq j_1 \sim \ell}\lambda_k \,\Delta_{\ell,x}\left(\Delta_{j_1,x}\mathrm{P}^{\varphi_k}_{x}f \ \Delta_{j_2,x}\mathrm{P}^{\psi_k}_{y}g\right),$$
$$ I_\ell^2:= \sum_{k} \sum_{j_1\leq j_2 \sim \ell}\lambda_k \,\Delta_{\ell,x}\left(\Delta_{j_1,x}\mathrm{P}^{\varphi_k}_{x}f \ \Delta_{j_2,x}\mathrm{P}^{\psi_k}_{y}g\right),$$
and
$$ I_\ell^3:= \sum_{k} \sum_{\substack{|j_1-j_2|\leq 4\\ \ell \leq j_1,j_2}}\lambda_k \,\Delta_{\ell,x}\left(\Delta_{j_1,x}\mathrm{P}^{\varphi_k}_{x}f \ \Delta_{j_2,x}\mathrm{P}^{\psi_k}_{y}g\right).$$

\smallskip
\noindent
\emph{Term $I_\ell^1$.}
We observe that $\Delta_{j_1,x}\mathrm{P}^{\varphi_k}_{x} \simeq \Delta_{j_1,x}$ for $j_1\leq k$ and that it vanishes if $j_1> k$. Thus, the sum in $I_1$ may be reduced to the following model sum,
$$ I_\ell^1\simeq \sum_{k\geq \ell}  \sum_{j_2\leq \ell}\lambda_k \,\Delta_{\ell,x}\left(\Delta_{\ell,x}f \ \Delta_{j_2,x}\mathrm{P}^{\psi_k}_{y}g\right),$$
which implies
$$ 2^{\ell s} |I_\ell^1|\lesssim \sum_{k\geq \ell} 2^{(\ell-k)s}\big|\Delta_{\ell,x}\left(\Delta_{\ell,x} f \ 2^{k s}\mathrm{P}^{\phi_\ell}_{x}\mathrm{P}^{\psi_k}_{y}g\right)\big|.$$
Here we use the generic notation for $\phi$ and the associated convolution operator. By usual estimates involving the Hardy-Littlewood maximal function $\M$ we get
$$ 2^{\ell s} |I_\ell^1|\lesssim \sum_{k\geq \ell} 2^{(\ell-k)s} \M_{x}\left( \M_{x} f  \ 2^{k s}\M_x \mathrm{P}^{\psi_k}_{y} g\right).$$
Hence for $s>0$ by Young's inequality,
$$ \left\| 2^{\ell s} I_\ell^1 \right\|_{\ell^2(\ell\geq 0)} \lesssim \left\| \M_{x}\left( \M_{x} f \ 2^{k s}\M_x \mathrm{P}^{\psi_k}_{y}g\right) \right\|_{\ell^2(k)}.$$
Using the $\ell^2$-valued inequality for the maximal function we obtain
\begin{align*}
 \left\| \| 2^{\ell s} I_\ell^1 \|_{\ell^{2}(\ell\geq 0)} \right\|_{\mathrm{L}^r} & \lesssim  \| f\|_{\mathrm{L}^p}  \left\| \| 2^{k s}\mathrm{P}^{\psi_k}_{y}g\|_{\ell^2(k\geq 0)}\right\|_{\mathrm{L}^q} \\
 & \lesssim \|f\|_{\mathrm{L}^p} \|g\|_{\mathrm{L}^q(W^{s,q}_y)} \lesssim \|f\|_{\mathrm{L}^p} \|g\|_{W^{s,q}}.
 \end{align*}

\smallskip
\noindent
\emph{Term $I_\ell^2$.}
By similar considerations we observe that this term can be reduced to the following model sum,
$$ I_\ell^2\simeq  \sum_{k\geq j_1} \sum_{j_1\leq \ell}\lambda_k \,\Delta_{\ell,x}\left(\Delta_{j_1,x}\mathrm{P}^{\varphi_k}_{x}f \ \Delta_{\ell,x}\mathrm{P}^{\psi_k}_{y}g\right).$$
Computing the sum over $j_1$ and $\ell$ it may be reduced to a twisted paraproduct applied to $f$ and $\Delta_{\ell,x}g$,
$$ I_\ell^2 \simeq \Pi_{\varphi,\psi,\lambda}(f, \Delta_{\ell,x} g) .$$
Using the $\mathrm{L}^p \times \mathrm{L}^q \to \mathrm{L}^r$ boundedness of $\Pi_{\varphi,\psi,\lambda}$ we also know that it admits $\ell^2$-valued estimates. Hence,
\begin{align*}
 \left\| \| 2^{\ell s} I_\ell^2 \|_{\ell^{2}(\ell\geq 0)} \right\|_{\mathrm{L}^r} & \lesssim   \left\| \| 2^{\ell s}\Pi_{\varphi,\psi,\lambda}(f, \Delta_{\ell,x} g) \|_{\ell^{2}(\ell\geq 0)} \right\|_{\mathrm{L}^r} \\
 &   \lesssim \| f\|_{\mathrm{L}^p}  \left\| \| 2^{\ell s} \Delta_{\ell,x} g\|_{\ell^2(\ell\geq 0)}\right\|_{\mathrm{L}^q} \\
 & \lesssim \|f\|_{\mathrm{L}^p} \|g\|_{\mathrm{L}^q(W^{s,q}_x)} \lesssim \|f\|_{\mathrm{L}^p} \|g\|_{W^{s,q}}.
 \end{align*}

\smallskip
\noindent
\emph{Term $I_\ell^3$.}
By similar considerations we observe that this term can be reduced to the following model sum,
$$ I_\ell^3 \simeq \sum_{\ell \leq j\leq k} \lambda_k \,\Delta_{\ell,x}\left(\Delta_{j,x}\mathrm{P}^{\varphi_k}_{x}f \ \Delta_{j,x}\mathrm{P}^{\psi_k}_{y}g\right).$$
Computing the sum over $k$ we obtain
\begin{align*}
 I_\ell^3 & \simeq \sum_{\ell \leq j\leq k} \lambda_k \, \Delta_{\ell,x}\Big(\Delta_{j,x}f \ \Delta_{j,x}\mathrm{P}^{\psi_k}_{y}g\Big) \\
  & \simeq \sum_{\ell \leq j }  \Delta_{\ell,x}\Big(\Delta_{j,x}f \ \Delta_{j,x} \Big(\sum_{j\leq k} \lambda_k \mathrm{P}^{\psi_k}_{y}\Big)g\Big).
\end{align*}
Reasoning as for the first term gives
\begin{align*}
 \left\| \| 2^{s\ell}I_\ell^3\|_{\ell^2(\ell)}\right\|_{\mathrm{L}^r}   & \lesssim  \|f\|_{\mathrm{L}^p} \bigg\| \Big\| 2^{js }\Big(\sum_{j\leq k} \lambda_k \mathrm{P}^{\psi_k}_{y}\Big)g\Big\|_{\ell^2(j)} \bigg\|_{\mathrm{L}^q} \\
   & \lesssim  \|f\|_{\mathrm{L}^p} \left\| \| 2^{ks } \mathrm{P}^{\psi_k}_{y}g\|_{\ell^2(k)} \right\|_{\mathrm{L}^q} \\
   & \lesssim \|f\|_{\mathrm{L}^p} \|g\|_{W^{s,q}}.
   \end{align*}

\smallskip
\noindent
This completes the proof of Sobolev estimates for the three terms and thus also establishes Lemma \ref{lemma1}.
\end{proof}

Now we are ready to prove the general result by a somewhat standard decomposition of (\ref{eqnewmultiplier})
into a rapidly convergent series of twisted paraproducts (\ref{eqtwistedpar2}).

\begin{proof}[Proof of Theorem \ref{theorem2}]
Let $a$ be the smallest nonnegative integer such that $c\leq 2^a$.
For $(\tau_1,\tau_2)\in\mathop{\mathrm{supp}}m$ we have by (\ref{eqthm1support}),
$$ 1 = \sum_{\substack{j,k\in\mathbb{Z}\\ j\leq k+a+2}} \vartheta(2^{-j}\tau_1) \,\vartheta(2^{-k}\tau_2)
= \sum_{k\in\mathbb{Z}} \theta(2^{-k-a-3}\tau_1) \,\vartheta(2^{-k}\tau_2) , $$
because the term in the double sum can be nonzero only when
$$ 2^{j-1} \leq |\tau_1| \leq 2^a |\tau_2| \leq 2^{k+a+1} . $$
Multiplying by $m$ we obtain $m=\sum_{k\in\mathbb{Z}}m_k$, where
$$ m_k(\tau_1,\tau_2) = m(\tau_1,\tau_2) \,\theta(2^{-k-a-3}\tau_1) \,\vartheta(2^{-k}\tau_2) . $$
Obviously, $m_k$ is supported in
$$ \big\{(\tau_1,\tau_2)\in\mathbb{R}^2 \,:\, |\tau_1|\leq 2^{k+a+3}, \ 2^{k-1}\!\leq|\tau_2|\leq2^{k+1} \big\} , $$
so let us expand it into a Fourier series on $[-2^{k+a+4},2^{k+a+4}]^2$,
$$ m_k(\tau_1,\tau_2) = \sum_{n_1,n_2\in\mathbb{Z}} \kappa_{n_1,n_2}^{(k)} e^{i\pi 2^{-k-a-4}(n_1\tau_1+n_2\tau_2)} . $$
By (\ref{eqhomsymbolest}) and the product rule we have
$$ \big|\partial_{\tau_1}^{\beta_1}\partial_{\tau_2}^{\beta_2}m_k(\tau_1,\tau_2)\big|
\lesssim_{\beta_1,\beta_2} 2^{-(\beta_1+\beta_2)k} . $$
Since the Fourier coefficients satisfy
$$ n_1^{\beta_1}n_2^{\beta_2}\kappa_{n_1,n_2}^{(k)} = \Big(\frac{i\,2^{k+a+4}}{\pi}\Big)^{\beta_1+\beta_2}
-\!\!\!\!\!\!\int_{[-2^{k+a+4},2^{k+a+4}]^2} \partial_{\tau_1}^{\beta_1}\partial_{\tau_2}^{\beta_2}m_k(\tau_1,\tau_2)
e^{i\pi 2^{-k-a-4}(n_1\tau_1+n_2\tau_2)} d\tau_1 d\tau_2 , $$
we have obtained
\begin{equation}\label{eqcoeffest}
|\kappa_{n_1,n_2}^{(k)}| \,\lesssim (1+|n_1|+|n_2|)^{-10} .
\end{equation}

Define $\varphi^{(n_1)}$ and $\psi^{(n_2,i)}$ by
\begin{align*}
& \widehat{\varphi}^{(n_1)}(\tau) = \theta(2^{-a-4}\tau)e^{i\pi 2^{-a-4}n_1\tau}, \\
& \widehat{\psi}^{(n_2,i)}(\tau) = \vartheta(2^{-i}\tau) e^{i\pi 2^{-a-4}n_2\tau} \quad \textrm{for } i=-1,0,1 ,
\end{align*}
so that
$$ m(\tau_1,\tau_2) = \sum_{i\in\{-1,0,1\}}\sum_{n_1,n_2\in\mathbb{Z}} \sum_{k\in\mathbb{Z}} \,\kappa^{(k)}_{n_1,n_2}
\,\widehat{\varphi}^{(n_1)}(2^{-k}\tau_1) \,\widehat{\psi}^{(n_2,i)}(2^{-k}\tau_2) . $$
Finally, observe that the norms $\|\varphi^{(n_1)}\|_{\alpha,\beta}$, $\|\psi^{(n_2,i)}\|_{\alpha,\beta}$, $0\leq\alpha,\beta\leq 3$
grow at most with the third powers of $n_1$ and $n_2$, while the coefficients $\kappa_{n_1,n_2}^{(k)}$ decay much faster.
This realizes $T_m$ as a convergent sum of paraproducts (\ref{eqtwistedpar2}) and it remains to apply Lemma \ref{lemma1}.
\end{proof}

\section{Proof of Theorem 3}

In this section we only list the main ingredients and give references, as the proof closely follows excerpts from the existing literature.

\begin{proof}[Proof of Theorem \ref{theorem3}]
The way to get boundedness for symbols with smooth dependence on spatial variables is now well-known and it uses the Coifman-Meyer ``freezing argument'' \cite{CM}. Therefore we only sketch the proof and refer the reader to \cite[Theorem 2.4]{B} for a very similar argument adapted to some other bilinear Sobolev estimates.
The procedure relies on local estimates and we only have to check the following local version of (\ref{eqthm1estimate}): For every two-dimensional ball $B$ of radius $1$ and every $(x,y)$-independent symbol
$$ \sigma(x,y,\tau_1,\tau_2) = m(\tau_1,\tau_2) $$
satisfying (\ref{eqnonhomsymbolest}) one has
\begin{equation}\label{eqthm1estimateloc}
\|T_{m}(f,g)\|_{\mathrm{L}^{r}_{y}(\mathrm{W}^{s,r}_{x})(B)} \leq \sum_{j_1,j_2\geq 0} \gamma_{j_1,j_2} \|f\|_{\mathrm{L}^p(2^{j_1} B)} \|g\|_{\mathrm{W}^{s,q}(2^{j_2}B)},
\end{equation}
with some rapidly decreasing sequence of coefficients $\gamma_{j_1,j_2}$.

Following the reduction in the proof of Theorem \ref{theorem2} it is sufficient to check (\ref{eqthm1estimateloc}) for modified twisted paraproducts of the form $\Pi_{\varphi,\psi,\lambda}$, which correspond to the symbols
$$ m(\xi_1,\eta_2) =  \sum_{k\geq 0} \lambda_k \, \widehat{\varphi}(2^{-k}\xi_1)\widehat{\psi}(2^{-k}\eta_2).$$
Here we restrict the sum to nonnegative indices $k$, corresponding to frequencies at scales larger than $1$, since we are only concerned with non-homogeneous regularity condition on the symbol.
However, the kernel of this operator (given by the inverse Fourier transform of the symbol) is of the form $K_m\big((y_1,y_2),(z_1,z_2)\big)=K(y_1,z_2)$ and it satisfies
$$ \left| K(y_1,z_2)  \right| \lesssim (|y_1|+|z_2|)^{-N}$$
for an arbitrarily large exponent $N$. Thus it is easy to check that the bilinear operator satisfies (\ref{eqthm1estimateloc}) with coefficients
$$ |\gamma(j_1,j_2)| \sim (2^{j_1}+2^{j_2})^{-N}.$$
Indeed, the diagonal part \,$j_1,j_2\leq 4$\, is resolved by Theorem \ref{theorem2} and then the non-diagonal part \,$\max\{j_1,j_2\}>4$\, is an easy consequence of the pointwise decay of the kernel.

As a consequence, it is not difficult to see that (\ref{eqthm1estimateloc}) holds for every $(x,y)$-independent symbol satisfying (\ref{eqnonhomsymbolest}). Therefore this estimate also holds for $(x,y)$-dependent symbols, due to the Coifman-Meyer freezing argument, as we have already commented. Finally, summing these local estimates over a covering of the whole space by balls of radius $1$, we obtain the required global estimate.
\end{proof}

\begin{bibdiv}
\begin{biblist}

\bib{BL}{book}{
author={J. Bergh},
author={J. L\"{o}fstr\"{o}m},
title={Interpolation spaces. An introduction},
series={Grundlehren der mathematischen Wissenschaften},
publisher={Springer-Verlag},
volume={223},
year={1976},
}

\bib{BNT}{article}{
author={\'{A}. B\'{e}nyi},
author={A. R. Nahmod},
author={R. H. Torres},
title={Sobolev space estimates and symbolic calculus for bilinear pseudodifferential operators},
journal={J. Geom. Anal.},
volume={16},
year={2006},
number={3},
pages={431--453},
note={},
eprint={}
}

\bib{BT}{article}{
author={\'{A}. B\'{e}nyi},
author={R. H. Torres},
title={Symbolic calculus and the transposes of bilinear pseudodifferential operators},
journal={Comm. Partial Differential Equations},
volume={28},
year={2003},
number={5--6},
pages={1161--1181},
note={},
eprint={}
}

\bib{B}{article}{
author={F. Bernicot},
title={A bilinear pseudodifferential calculus},
journal={J. Geom. Anal.},
volume={20},
year={2010},
number={1},
pages={39--62}
}

\bib{Ber}{article}{
author={F. Bernicot},
title={Fiber-wise Calder\'{o}n-Zygmund decomposition and application to a bi-dimensional paraproduct},
journal={},
volume={},
year={},
number={},
pages={},
note={To appear in Illinois J. Math.},
eprint={}
}

\bib{CM}{book}{
author={R. Coifman},
author={Y. Meyer},
title={Au del\`a des op\'erateurs pseudo-diff\'erentiels},
series={Ast\'erisque},
publisher={Soc. Math. Fr., Paris},
volume={57},
year={1978},
}

\bib{CoMe}{article}{
author={R. Coifman},
author={Y. Meyer},
title={Commutateurs d'int\'{e}grales singuli\`{e}res et op\'{e}rateurs multilin\'{e}aires},
journal={Ann. Inst. Fourier (Grenoble)},
volume={28},
year={1978},
number={3},
pages={177--202},
note={},
eprint={}
}

\bib{CM2}{book}{
author={R. Coifman},
author={Y. Meyer},
title={Ondelettes et op\'{e}rateurs. III. Op\'{e}rateurs multilin\'{e}aires},
series={Actualit\'{e}s Math\'{e}matiques},
publisher={Hermann, Paris},
volume={},
year={1991},
}

\bib{DT}{article}{
author={C. Demeter},
author={C. Thiele},
title={On the two-dimensional bilinear Hilbert transform},
journal={Amer. J. Math.},
volume={132},
year={2010},
number={1},
pages={201--256},
note={},
eprint={}
}

\bib{Kov}{article}{
author={V. Kova\v{c}},
title={Boundedness of the twisted paraproduct},
journal={Rev. Mat. Iberoam.},
volume={28},
year={2012},
number={4},
pages={1143--1164},
note={},
eprint={}
}

\bib{LT1}{article}{
author={M. Lacey},
author={C. Thiele},
title={$L^p$ estimates on the bilinear Hilbert transform for $2<p<\infty$},
journal={Ann. of Math. (2)},
volume={146},
year={1997},
number={3},
pages={693--724},
note={},
eprint={}
}

\bib{LT2}{article}{
author={M. Lacey},
author={C. Thiele},
title={On Calder\'{o}n's conjecture},
journal={Ann. of Math. (2)},
volume={149},
year={1999},
number={2},
pages={475--496},
note={},
eprint={}
}

\bib{MTT}{article}{
author={C. Muscalu},
author={T. Tao},
author={C. Thiele},
title={Multi-linear operators given by singular multipliers},
journal={J. Amer. Math. Soc.},
volume={15},
year={2002},
number={2},
pages={469--496},
note={},
eprint={}
}

\end{biblist}
\end{bibdiv}

\end{document}